\newtheorem{theorem}{Theorem}
\newtheorem{definition}[theorem]{Definition}
\DeclareMathOperator{\tr}{tr}
\DeclareMathOperator{\sgn}{sgn}
\DeclareMathOperator{\lspan}{span}
\title{Limit cycles in mass-conserving deficiency-one\\ mass-action systems\footnote{BB was supported by the Austrian Science Fund (FWF), project P32532.}}
\author{Bal\'azs Boros and Josef Hofbauer}
\date{\small{University of Vienna, Austria}}
\begin{document}

\maketitle

\begin{abstract}
We present some simple mass-action systems with limit cycles that fall under the scope of the Deficiency-One Theorem. All the constructed examples are mass-conserving and their stoichiometric subspace is two-dimensional. Using the continuation software MATCONT, we depict the limit cycles in all stoichiometric classes at once. The networks are trimolecular and tetramolecular, and some exhibit two or even three limit cycles. Finally, we show that the associated mass-action system of a bimolecular reaction network with two-dimensional stoichiometric subspace does not admit a limit cycle.

\noindent{\bf{Keywords:}} Andronov--Hopf bifurcation, focal value, limit cycle, parallelogram

\noindent{\bf{2020 Mathematics Subject Classification:}} 34C25, 34C23, 37G10, 37G15
\end{abstract}

\section{Introduction}

Recently we have constructed a number of planar deficiency-one mass-action systems that oscillate \cite{boros:hofbauer:2021a}. In this paper we provide a couple of examples that admit limit cycles, all with three species (whose concentrations are denoted by $x$, $y$, $z$) and a linear conservation law $d_1 \dot{x} + d_2 \dot{y} + d_3 \dot{z} = 0$ with $d_1$, $d_2$, $d_3 >0$. Thus, $d_1 x + d_2 y + d_3 z = c$ for some $c>0$ holds for all time and hence, beyond the rate constants, there is another parameter. The main approach we follow is that we regard this additional parameter, $c$, a bifurcation parameter. We will be interested in the stability of equilibria and limit cycles as $c$ varies, and in the case of limit cycles their number can also vary by $c$. We will see examples with multiple limit cycles, in one of the cases we can even prove the existence of three limit cycles that are all born via a degenerate Andronov--Hopf bifurcation. Probably the most interesting phenomenon found in this paper is a mass-action system with the unique positive equilibrium being asymptotically stable for all $c>0$, but not globally stable for $c_1^* \leq c \leq c^*_2$, because a torus formed of stable and unstable limit cycles surrounds the curve of equilibria. We extensively use the continuation software MATCONT \cite{dhooge:govaerts:kuznetsov:2003} to visualise the limit cycles in the $(x,y,z)$-space for all $c>0$ at once, while fixing all the rate constants.

A common feature of the networks analysed in this paper is that they all have at least one chemical complex that is trimolecular or tetramolecular. This is necessary, as the mass-action system associated to a bimolecular reaction network with two-dimensional stoichiometric subspace does not admit limit cycles. This latter fact is known for $2$ and $3$ species \cite{pota:1983,pota:1985}, while for arbitrary number of species we prove it in \Cref{sec:bimolecular}. In fact, we show that essentially the only bimolecular reaction networks with a two-dimensional stoichiometric subspace whose associated mass-action system oscillates are the Lotka and the Ivanova reactions, where each positive non-equilibrium solution is periodic.

The rest of this paper is organised as follows. In \Cref{sec:mass_action} we collect the needed terminology and some basic results from chemical reaction network theory. In \Cref{sec:parallelograms} we present a number of mass-conserving networks that admit (multiple) limit cycles. Finally, in \Cref{sec:bimolecular} we show that the associated mass-action system of a bimolecular reaction network with two-dimensional stoichiometric subspace does not admit a limit cycle.
\section{Mass-action systems}
\label{sec:mass_action}

In this section we briefly introduce mass-action systems and related notions that are necessary for our exposition. For more details about mass-action systems, consult e.g. \cite{feinberg:1987}, \cite{gunawardena:2003}. The symbols $\mathbb{R}_+$ and $\mathbb{Z}_{\geq0}$ denote the set of positive real numbers and the set of nonnegative integers, respectively.

\begin{definition}
A {\emph{Euclidean embedded graph}} (or a \emph{reaction network}) is a directed graph $(V,E)$, where $V$ is a nonempty finite subset of $\mathbb{Z}^n_{\geq0}$.
\end{definition}

Denote by $\mathsf{X}_1, \ldots, \mathsf{X}_n$ the $n$ \emph{species} and by $y^{1}, \ldots, y^{m}$ the elements of $V$, called \emph{complexes}. Accordingly, we often refer to $y^{i}$ as $y^{i}_1\mathsf{X}_1 + \cdots + y^{i}_n\mathsf{X}_n$. The entries of $y^{i}$ are the \emph{stoichiometric coefficients}. The concentrations of the species $\mathsf{X}_1, \ldots, \mathsf{X}_n$ at time $\tau$ are collected in the vector $x(\tau) \in \mathbb{R}^n_+$.

\begin{definition}
A {\emph{mass-action system}} is a triple $(V,E, \kappa)$, where $(V,E)$ is a reaction network and $\kappa\colon E \to \mathbb{R}_+$ is the collection of the \emph{rate constants}. Its \emph{associated differential equation} on $\mathbb{R}^n_+$ is
\begin{align}
\label{eq:mass_action_ode}
\dot{x}(\tau) = \sum_{(i,j)\in E} \kappa_{ij} x_1(\tau)^{y^{i}_1}\cdots x_n(\tau)^{y^{i}_n} (y^{j}-y^{i}).
\end{align}
\end{definition}

The span $\mathcal{S} = \lspan\{y^j-y^i\colon (i,j)\in E\}\leq \mathbb{R}^n$ is called the \emph{stoichiometric subspace} and the sets $(p+\mathcal{S})\cap\mathbb{R}^n_+$ for $p\in\mathbb{R}^n_+$ are called the \emph{(positive) stoichiometric classes}. The stoichiometric classes provide a foliation of the positive orthant $\mathbb{R}^n_+$ into forward invariant sets of the mass-action differential equation \eqref{eq:mass_action_ode}. Therefore, dynamical questions (e.g. existence, uniqueness, stability, or number of equilibria or limit cycles) are examined relative to a stoichiometric class. The \emph{rank} of a reaction network (or its associated mass-action system) is defined to be the dimension of its stoichiometric subspace.

In some cases, a network property alone has consequences on the qualitative behaviour of the differential equation \eqref{eq:mass_action_ode}. For instance, if the directed graph $(V,E)$ is strongly connected (i.e., for all $i,j\in V$ there exists a directed path from $i$ to $j$) then the associated mass-action differential equation is permanent \cite[Theorem 1.3]{gopalkrishnan:miller:shiu:2014}, \cite[Theorem 5.5]{anderson:cappelletti:kim:nguyen:2020}, \cite[Theorem 4.2]{boros:hofbauer:2020}. We now define permanence.

\begin{definition}
A mass-action system is \emph{permanent} in a stoichiometric class $\mathcal{P}$ if there exists a compact set $K\subseteq\mathcal{P}$ with the property that for each solution $\tau \mapsto x(\tau)$ with $x(0)\in\mathcal{P}$ there exists a $\tau_0\geq0$ such that $x(\tau)\in K$ holds for all $\tau\geq\tau_0$. A mass-action system is permanent if it is permanent in every stoichiometric class.
\end{definition}

\begin{theorem}[\cite{gopalkrishnan:miller:shiu:2014,anderson:cappelletti:kim:nguyen:2020,boros:hofbauer:2020}]
\label{thm:wr_permanence}
If $(V,E)$ is strongly connected then the mass-action system $(V,E,\kappa)$ is permanent.
\end{theorem}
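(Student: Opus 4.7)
Permanence in a stoichiometric class $\mathcal{P}$ is equivalent to the conjunction of two properties: \emph{persistence} (no $\omega$-limit point of a positive trajectory lies on $\partial\mathbb{R}^n_{\geq 0}$, so trajectories stay bounded away from the coordinate hyperplanes) and \emph{dissipativity} (trajectories are ultimately uniformly bounded in $\mathcal{P}$). I would prove the two statements independently, both times using only strong connectivity of $(V,E)$.

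For persistence, I would use the siphon technique. Recall that a \emph{siphon} is a nonempty set $Z\subseteq\{\mathsf{X}_1,\dots,\mathsf{X}_n\}$ such that every reaction whose product has support meeting $Z$ already has a reactant supported in $Z$; the faces of $\partial\mathbb{R}^n_{\geq 0}$ that are forward invariant under \eqref{eq:mass_action_ode} are exactly those cut out by siphons. The key structural claim is that when $(V,E)$ is strongly connected, every siphon $Z$ contains the support of some nonzero nonnegative conservation law $v\in\mathcal{S}^\perp$. Granting this, a standard Lyapunov argument using $L(x)=-\sum_{i\in Z} v_i\log x_i$ shows that the corresponding boundary face is repelling, so $\omega$-limit sets avoid $\partial\mathbb{R}^n_{\geq 0}$. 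To establish the structural claim I would restrict $(V,E)$ to the subgraph of reactions that fix $Z$, exploit that strong connectivity passes to this subgraph (or at least to a nonempty strongly connected component of it) to obtain a positive left-eigenvector of the associated incidence matrix, and pull this vector back to produce the desired $v$ supported on $Z$.

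The harder step is dissipativity, because the stoichiometric class $\mathcal{P}$ itself may be unbounded. Here my plan is to fix a positive equilibrium $x^*\in\mathcal{P}$ (whose existence for strongly connected networks is itself nontrivial but standard) and to study the pseudo-Helmholtz function $H(x)=\sum_i\bigl(x_i\log(x_i/x^*_i)-x_i+x^*_i\bigr)$. Although $H$ is in general \emph{not} a Lyapunov function in the absence of complex balance, strong connectivity permits one to decompose $\dot H$ as a sum of contributions over directed cycles of $(V,E)$ and to estimate each contribution so that $\dot H<0$ outside a compact subset of $\mathcal{P}$; combined with persistence this yields permanence. The main obstacle I anticipate is precisely this cycle-by-cycle estimate: one must control cross-terms between distinct cycles without invoking complex balance. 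It is at this step that the three cited references diverge technically, using, respectively, a siphon/trapping-region argument, a time-rescaling and tier decomposition, and a direct entropy estimate; any of these can be adapted, but each requires careful bookkeeping of monomials along the cycles.
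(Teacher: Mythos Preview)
The paper does not prove this theorem; it is stated with three literature citations and thereafter used as a black box, so there is no argument in the paper to compare yours against.

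Regarding your outline on its own merits: the split into persistence plus dissipativity is sound, and the siphon approach to persistence is a recognised strategy (though your mechanism for the structural claim---``strong connectivity passes to this subgraph''---would need to be made precise). The dissipativity half, however, is not a proof but a plan with a self-acknowledged hole. You propose to show $\dot H<0$ outside a compact set via a cycle decomposition, then immediately concede that controlling the cross-terms between cycles is the obstacle and that the three cited papers handle this step by quite different means (toric differential inclusions and jets; tier partitions of the species; permanence via average Lyapunov functions). None of those proofs actually runs the pseudo-Helmholtz argument you sketch, and no such direct entropy estimate is known for general strongly connected systems in the absence of complex balance. So the gap you yourself flag is real, and as written the proposal defers the essential difficulty back to the very references the theorem cites.
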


We now recall a classical theorem on the number of positive equilibria for mass-action systems with low deficiency. The \emph{deficiency} of a reaction network $(V,E)$ is the nonnegative integer $\delta = m - \ell - \dim\mathcal{S}$, where $m = \lvert V \rvert$, $\ell$ is the number of connected components of the directed graph $(V,E)$, and $\mathcal{S}$ is the stoichiometric subspace.

\begin{theorem}[Deficiency-One Theorem \cite{feinberg:1995}]
\label{thm:dfc1}
Assume that the reaction network $(V,E)$ is strongly connected and its deficiency is zero or one. Then the following statements hold.
\begin{enumerate}[(i)]
\item There exists a unique positive equilibrium in every stoichiometric class.
\item The set of positive equilibria equals $\{x \in \mathbb{R}^n_+ \colon \log x - \log x^* \in \mathcal{S}^\perp\}$, where $x^*$ is any given positive equilibrium.
\item Denoting by $J\in\mathbb{R}^{n\times n}$ the Jacobian matrix at a positive equilibrium,
\begin{enumerate}[(a)]
    \item the linear map $J|_\mathcal{S}\colon \mathcal{S} \to \mathcal{S}$ is nonsingular and
    \item the linear map $-J|_\mathcal{S}\colon \mathcal{S} \to \mathcal{S}$ is orientation-preserving, i.e., its determinant is positive, or equivalently $\sgn \det J|_\mathcal{S} = (-1)^{\dim \mathcal{S}}$.
\end{enumerate}
\end{enumerate}
\end{theorem}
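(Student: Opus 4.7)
The plan is to handle parts (i) and (ii) simultaneously (since the uniqueness in (i) is naturally read off the toric description in (ii)) and then to derive (iii) from (ii) by a short Jacobian argument. For (i)--(ii) I would split into the cases $\delta=0$ and $\delta=1$. In the deficiency-zero case, strong connectivity implies weak reversibility; the Horn--Jackson theorem then forces every positive equilibrium to be complex balanced, and writing out complex balance at two such equilibria $x$ and $x^*$ and dividing gives $\log x - \log x^* \in \mathcal{S}^\perp$ immediately. Uniqueness within each stoichiometric class follows because the pseudo-Helmholtz function $H(x) = \sum_i \bigl(x_i\log(x_i/x^*_i) - x_i + x^*_i\bigr)$ is strictly convex on the affine slice $(x^*+\mathcal{S})\cap\mathbb{R}^n_+$ and its critical points on that slice are exactly the positive equilibria.

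The deficiency-one case is the real obstacle: equilibria need not be complex balanced, so the Lyapunov shortcut above collapses. Here I would follow Feinberg's original strategy, in which the constraint $\delta=1$ (together with strong connectivity, so that there is a single linkage class) is used to reduce the complex-balancing defect to a one-parameter family. Writing the equilibrium equations in monomial form and exploiting this one scalar degree of freedom, a sign-compatibility and monotonicity argument yields exactly one positive equilibrium per stoichiometric class together with the toric description in (ii). The combinatorial bookkeeping of how the deficiency sits inside the graph and the delicate monotonicity step at the core of this reduction are what make the Deficiency-One Theorem substantially deeper than its deficiency-zero counterpart.

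Granted (ii), part (iii) is short. The image of $J$ lies in $\mathcal{S}$ and its kernel equals the tangent space to the positive equilibrium manifold at $x^*$, which by (ii) is $T = \{v \in \mathbb{R}^n : \mathrm{diag}(x^*)^{-1}v \in \mathcal{S}^\perp\}$. If $v \in T\cap \mathcal{S}$ and $u = \mathrm{diag}(x^*)^{-1}v$, then $u \in \mathcal{S}^\perp$ yet $v \cdot u = \sum_i x^*_i u_i^2 = 0$, forcing $u = 0$ and hence $v = 0$; so $J|_\mathcal{S}$ is bijective, which is (a). For the sign in (b) I would use a connectedness argument: on the open connected set $\mathbb{R}_+^{|E|}$ of rate constants the quantity $\det J|_\mathcal{S}$ is continuous and, by (a), never zero, so its sign is constant. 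Since for any weakly reversible network the locus of complex-balanced rate constants is nonempty, it suffices to evaluate the sign at such a point, where a direct computation shows $-J|_\mathcal{S}$ is positive definite with respect to the weighted inner product $\langle u,v\rangle_{x^*} = \sum_i u_iv_i/x^*_i$, yielding $\det(-J|_\mathcal{S}) > 0$ as required.
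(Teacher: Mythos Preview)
Your treatment of (iii)(b) is essentially the paper's own argument: the paper also fixes a stoichiometric class, observes that $\kappa\mapsto\det J|_\mathcal{S}$ is continuous and nowhere zero (by (iii)(a)) on the connected set $\mathbb{R}^E_+$, hence of constant sign, and then reads off that sign from the linearly stable complex-balanced case. For (i), (ii) and (iii)(a) the paper does not argue at all; it simply cites Feinberg's 1995 paper. So your sketch for (i)--(ii) is already more than the paper provides, and your identification of the deficiency-one case as the hard part, to be handled by Feinberg's one-parameter reduction, is accurate.

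There is, however, a genuine gap in your argument for (iii)(a). You assert that $\ker J$ \emph{equals} the tangent space $T=\{v:\mathrm{diag}(x^*)^{-1}v\in\mathcal{S}^\perp\}$ of the equilibrium manifold, and then use $T\cap\mathcal{S}=\{0\}$ to conclude $\ker J\cap\mathcal{S}=\{0\}$. Only the inclusion $T\subseteq\ker J$ is automatic (differentiate $f\equiv 0$ along the equilibrium curve). The reverse inclusion $\ker J\subseteq T$ is equivalent to $\dim\ker J\le n-\dim\mathcal{S}$, i.e.\ to $\rank J\ge\dim\mathcal{S}$; combined with $\mathrm{im}\,J\subseteq\mathcal{S}$ this is exactly the statement that $J|_\mathcal{S}$ is surjective, hence bijective. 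In other words, the equality $\ker J=T$ is what you are trying to prove, not something you can invoke. Your computation does establish the useful transversality $\mathbb{R}^n=T\oplus\mathcal{S}$, but that alone does not control $\ker J\cap\mathcal{S}$. Feinberg's proof of (iii)(a) proceeds differently, via the algebraic structure of the mass-action Jacobian (essentially a factorisation through the Laplacian of the reaction graph), and it is this input that your sketch is missing.
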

\begin{proof}
Parts (i), (ii), (iii)(a) are proven in \cite{feinberg:1995}.

We now prove part (iii)(b). For a fixed stoichiometric class, consider the map $\mathbb{R}^E_+ \to \mathbb{R}$ that assigns to each set of rate constants the determinant of the restricted Jacobian map at the unique positive equilibrium. This map is continuous, it is everywhere nonzero by part (iii)(a), and hence has a constant sign. For the subset of complex balanced systems the equilibrium is linearly stable (see \cite[Theorem 4.3.2]{johnston:2011}, \cite[Theorem 15.2.2]{feinberg:2019}, or \cite[Theorem 8]{boros:mueller:regensburger:2020}), and hence the sign is $(-1)^{\dim \mathcal{S}}$.
\end{proof}
As a consequence of part (iii)(b) in \Cref{thm:dfc1}, when $\dim\mathcal{S} = 2$ then the product of the two nonzero eigenvalues at a positive equilibrium is positive, hence it is enough to look at the trace for deciding stability: if the trace is negative (respectively, positive) then the equilibrium is asymptotically stable (respectively, repelling) within its stoichiometric class. When the trace vanishes, the two nonzero eigenvalues are purely imaginary and the stability can be decided by computing the focal values.

In the special case when $\dim \mathcal{S} = n-1$, part (ii) in \Cref{thm:dfc1} has the immediate consequence that the set of positive equilibria can be parametrised as follows:
\begin{align}\label{eq:toric}
\{(x^*_1 t^{d_1},\ldots,x^*_n t^{d_n}) \colon t>0\},
\end{align}
where $x^* \in\mathbb{R}^n_+$ is any positive equilibrium and $d$ is any nonzero vector in $\mathcal{S}^\perp$.

A reaction network is \emph{mass-conserving} if the stoichiometric classes are bounded, or equivalently there exists a $d \in \mathcal{S}^\perp$ with all coordinates being positive.

In \Cref{subsec:1LC,subsec:2LC_1Hopf,subsec:2LC_2Hopf}, all the networks fall under the scope of the \Cref{thm:dfc1} and satisfy $n = 3$ and $\dim \mathcal{S} = 2$. Thus, in all of the examples the set of positive equilibria is parametrised as in \eqref{eq:toric} and their stability is decided by the sign of the trace of the Jacobian matrix (provided it is nonzero).
\section{Parallelograms}
\label{sec:parallelograms}

Our goal is to find some simple mass-conserving reaction networks that fall under the scope of the Deficiency-One Theorem and their associated mass-action systems admit limit cycles. By the recent result \cite[Theorem 1]{banaji:boros:hofbauer:2021}, a way to achieve this is the following: first find a planar mass-action system with a limit cycle (or multiple limit cycles) and then add a new species to some of the reactions in such a way that the rank of the new network is still two and the stoichiometric classes become bounded. For instance, one confirms that all the three parallelograms in \Cref{fig:planar_parallelograms} admit limit cycles. This can be proven by showing that there exist rate constants such that the unique positive equilibrium is repelling. The existence of a stable limit cycle then follows from the permanence of the system and the Poincar\'e--Bendixson Theorem. One finds that each of the second and the third parallelograms admits even two limit cycles (to show this, one calculates the first focal value at the positive equilibrium and finds that it can be positive for some rate constants, allowing an unstable limit cycle to be born via a subcritical Andronov--Hopf bifurcation).

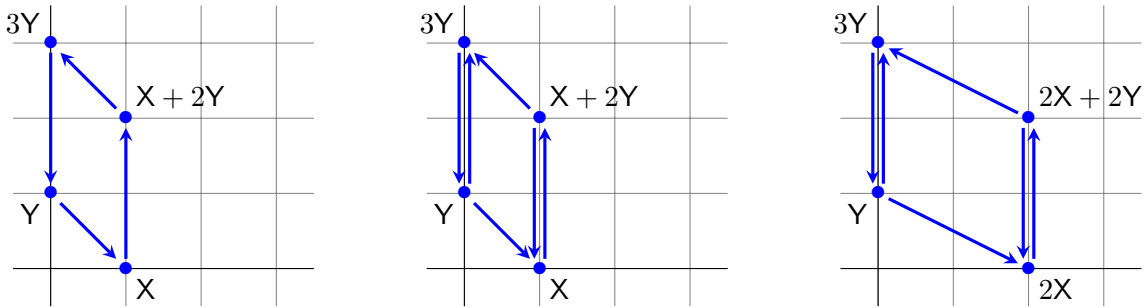
\begin{figure}[h]
\centering
\begin{tikzpicture}

\draw [step=1, gray, very thin] (-0.5,-0.5) grid (3.5,3.5);
\draw [ -, black] (-0.5,0)--(3.5,0);
\draw [ -, black] (0,-0.5)--(0,3.5);

\node[inner sep=0,outer sep=1] (P1) at (0,1) {\large \textcolor{blue}{$\bullet$}};
\node[inner sep=0,outer sep=1] (P2) at (1,0) {\large \textcolor{blue}{$\bullet$}};
\node[inner sep=0,outer sep=1] (P3) at (1,2) {\large \textcolor{blue}{$\bullet$}};
\node[inner sep=0,outer sep=1] (P4) at (0,3) {\large \textcolor{blue}{$\bullet$}};

\node [below left]  at (P1) {$\mathsf{Y}$};
\node [below right] at (P2) {$\mathsf{X}$};
\node [above right] at (P3) {$\mathsf{X}+2\mathsf{Y}$};
\node [above left]  at (P4) {$3\mathsf{Y}$};

\draw[arrows={-stealth},very thick,blue] (P1) to node {} (P2);
\draw[arrows={-stealth},very thick,blue] (P2) to node {} (P3);
\draw[arrows={-stealth},very thick,blue] (P3) to node {} (P4);
\draw[arrows={-stealth},very thick,blue] (P4) to node {} (P1);

\begin{scope}[shift={(5.5,0)}]

\draw [step=1, gray, very thin] (-0.5,-0.5) grid (3.5,3.5);
\draw [ -, black] (-0.5,0)--(3.5,0);
\draw [ -, black] (0,-0.5)--(0,3.5);

\node[inner sep=0,outer sep=1] (P1) at (0,1) {\large \textcolor{blue}{$\bullet$}};
\node[inner sep=0,outer sep=1] (P2) at (1,0) {\large \textcolor{blue}{$\bullet$}};
\node[inner sep=0,outer sep=1] (P3) at (1,2) {\large \textcolor{blue}{$\bullet$}};
\node[inner sep=0,outer sep=1] (P4) at (0,3) {\large \textcolor{blue}{$\bullet$}};

\node [below left]  at (P1) {$\mathsf{Y}$};
\node [below right] at (P2) {$\mathsf{X}$};
\node [above right] at (P3) {$\mathsf{X}+2\mathsf{Y}$};
\node [above left]  at (P4) {$3\mathsf{Y}$};

\draw[arrows={-stealth},very thick,blue] (P1) to node {} (P2);
\draw[arrows={-stealth},very thick,blue,transform canvas={xshift=2pt}] (P2) to node {} (P3);
\draw[arrows={-stealth},very thick,blue,transform canvas={xshift=-2pt}] (P3) to node {} (P2);
\draw[arrows={-stealth},very thick,blue] (P3) to node {} (P4);
\draw[arrows={-stealth},very thick,blue,transform canvas={xshift=2pt}] (P1) to node {} (P4);
\draw[arrows={-stealth},very thick,blue,transform canvas={xshift=-2pt}] (P4) to node {} (P1);

\end{scope}

\begin{scope}[shift={(11,0)}]

\draw [step=1, gray, very thin] (-0.5,-0.5) grid (3.5,3.5);
\draw [ -, black] (-0.5,0)--(3.5,0);
\draw [ -, black] (0,-0.5)--(0,3.5);

\node[inner sep=0,outer sep=1] (P1) at (0,1) {\large \textcolor{blue}{$\bullet$}};
\node[inner sep=0,outer sep=1] (P2) at (2,0) {\large \textcolor{blue}{$\bullet$}};
\node[inner sep=0,outer sep=1] (P3) at (2,2) {\large \textcolor{blue}{$\bullet$}};
\node[inner sep=0,outer sep=1] (P4) at (0,3) {\large \textcolor{blue}{$\bullet$}};

\node [below left]  at (P1) {$\mathsf{Y}$};
\node [below right] at (P2) {$2\mathsf{X}$};
\node [above right] at (P3) {$2\mathsf{X}+2\mathsf{Y}$};
\node [above left]  at (P4) {$3\mathsf{Y}$};

\draw[arrows={-stealth},very thick,blue] (P1) to node {} (P2);
\draw[arrows={-stealth},very thick,blue,transform canvas={xshift=2pt}] (P2) to node {} (P3);
\draw[arrows={-stealth},very thick,blue,transform canvas={xshift=-2pt}] (P3) to node {} (P2);
\draw[arrows={-stealth},very thick,blue] (P3) to node {} (P4);
\draw[arrows={-stealth},very thick,blue,transform canvas={xshift=2pt}] (P1) to node {} (P4);
\draw[arrows={-stealth},very thick,blue,transform canvas={xshift=-2pt}] (P4) to node {} (P1);

\end{scope}

\end{tikzpicture}
\caption{Three planar reaction networks that all fall under the scope of the Deficiency-One Theorem and their associated mass-action systems admit limit cycles. Each of the second and the third parallelograms admits even two limit cycles: there exist rate constants such that the unique positive equilibrium is asymptotically stable and is surrounded by an unstable and a stable limit cycle.}
\label{fig:planar_parallelograms}
\end{figure}

In \Cref{subsec:1LC,subsec:2LC_1Hopf,subsec:2LC_2Hopf}, we respectively lift the three parallelograms in \Cref{fig:planar_parallelograms} by adding a new species. In all the three cases, we obtain a three species mass-action system that is mass-conserving and that falls under the scope of the Deficiency-One Theorem. Thus, the set of positive equilibria is of the form \eqref{eq:toric}, it intersects every stoichiometric class in exactly one point. The aim is to find rate constants such that the positive equilibria are surrounded by limit cycles. In fact, we adopt the approach that the rate constants are fixed, and regard the stoichiometric class as a bifurcation parameter. In a number of cases, we visualise the (stable and unstable) limit cycles across the stoichiometric classes. This is performed using the numerical continuation software MATCONT \cite{dhooge:govaerts:kuznetsov:2003}.

\subsection{Supercritical Andronov--Hopf bifurcation}
\label{subsec:1LC}

Let us take the first planar parallelogram in \Cref{fig:planar_parallelograms} and add a new species, $\mathsf{Z}$, with stoichiometric coefficient $\gamma>0$ as follows:
\begin{align}\label{eq:ode_1LC}
\begin{aligned}
\begin{tikzpicture}[scale=1.4]

\node (P1) at (0,1)  {$\mathsf{Y}+\gamma \mathsf{Z}$};
\node (P2) at (1,0)  {$\mathsf{X}+\gamma \mathsf{Z}$};
\node (P3) at (1,2)  {$\mathsf{X}+2\mathsf{Y}$};
\node (P4) at (0,3)  {$3\mathsf{Y}$};

\draw[arrows={-stealth},thick] (P1) to node[below left] {$\kappa_1$} (P2);
\draw[arrows={-stealth},thick] (P2) to node[right] {$\kappa_2$} (P3);
\draw[arrows={-stealth},thick] (P3) to node[above right] {$\kappa_3$} (P4);
\draw[arrows={-stealth},thick] (P4) to node[left] {$\kappa_4$} (P1);

\node at (5,1.5) {$\begin{aligned}
\dot{x} &= \kappa_1 y z^\gamma - \kappa_3 x y^2, \\
\dot{y} &= -\kappa_1 y z^\gamma + \kappa_3 x y^2 + 2(\kappa_2 x z^\gamma - \kappa_4 y^3), \\
\dot{z} &= \gamma(-\kappa_2 x z^\gamma + \kappa_4 y^3),
\end{aligned}$};

\end{tikzpicture}
\end{aligned}
\end{align}
where we also displayed the associated mass-action differential equation. Note that $\gamma x + \gamma y + 2 z$ is a conserved quantity, and therefore the network is mass-conserving. By a short calculation, the set of positive equilibria is the curve
\begin{align*}
\left(\left(\frac{\kappa_1\kappa_4}{\kappa_2\kappa_3}\right)^\frac12 t^\gamma,
t^\gamma,
\left(\frac{\kappa_3\kappa_4}{\kappa_1\kappa_2}\right)^\frac{1}{2\gamma}t^2\right) \text{ for } t>0.
\end{align*}
Since the stability of a positive equilibrium within its stoichiometric class is determined by the sign of the trace of the Jacobian matrix, we compute the trace along the curve of equilibria and find it is
\begin{align} \label{eq:1LC_trace}
\left(\left(\frac{\kappa_1\kappa_3\kappa_4}{\kappa_2}\right)^\frac12-(\kappa_3+6\kappa_4)\right)t^{2\gamma}-\gamma^2\kappa_4\left(\frac{\kappa_1\kappa_2}{\kappa_3\kappa_4}\right)^{\frac{1}{2\gamma}}t^{3\gamma-2}  \text{ for } t>0.
\end{align}
As a function of $t$, it behaves differently for $\gamma<2$, $\gamma=2$, and $\gamma>2$. In \Cref{subsec:1LC_homog,subsec:1LC_nonhomog} we study the cases $\gamma = 2$ and $\gamma \neq 2$, respectively.

\subsubsection{Case $\gamma=2$}
\label{subsec:1LC_homog}

Notice that for $\gamma=2$, every complex in the network is trimolecular. As a consequence, the r.h.s.\ of the mass-action differential equation is a homogeneous polynomial (every monomial is of degree three). Furthermore, $x+y+z$ is conserved and it is not hard to see that in every stoichiometric class the dynamics is the same (up to scaling).

For $\gamma=2$, the set of positive equilibria is a half-line and the trace of the Jacobian matrix along that, formula \eqref{eq:1LC_trace}, equals
\begin{align*}
\left(\left(\frac{\kappa_1\kappa_3\kappa_4}{\kappa_2}\right)^\frac12-\kappa_3-6\kappa_4- 4\kappa_4\left(\frac{\kappa_1\kappa_2}{\kappa_3\kappa_4}\right)^{\frac{1}{4}}\right)t^4,
\end{align*}
an expression whose sign is independent of $t$. With $a(\kappa)$ denoting the coefficient of $t^4$, every positive equilibrium is asymptotically stable (respectively, repelling) if $a(\kappa)<0$ (respectively, $a(\kappa)>0$). For $a(\kappa)=0$ one computes the first focal value and finds it is negative, implying that the equilibria are asymptotically stable and the corresponding Andronov--Hopf bifurcation is supercritical. Thus, setting the rate constants such that $a(\kappa)=0$ and then perturbing them slightly to achieve $a(\kappa)>0$, results in the emergence of a stable limit cycle in every stoichiometric class. By the homogeneity, the phase portrait is the same in every stoichiometric class, and the limit cycles that are born via a supercritical Andronov--Hopf bifurcation indeed coexist in every stoichiometric class. In fact, since the system is permanent and the equilibrium is repelling for $a(\kappa)>0$, a stable limit cycle exists in every stoichiometric class for all rate constants with $a(\kappa)>0$. We depicted these limit cycles in \Cref{fig:1LC_homog} with $\kappa_1 = 16$, $\kappa_2 = \frac{1}{16}$, $\kappa_3 = 1$, $\kappa_4 = 1$ (thus, $a(\kappa)=5>0$).

\subsubsection{Case $\gamma\neq2$}
\label{subsec:1LC_nonhomog}

When $\gamma\neq2$, the exponents $2\gamma$ and $3\gamma-2$ of $t$ in the trace formula \eqref{eq:1LC_trace} are unequal. Since the coefficient of $t^{3\gamma-2}$ is negative, the trace vanishes at some $t>0$ if and only if the coefficient of $t^{2\gamma}$ is positive. Thus, if $\sqrt{\frac{\kappa_1}{\kappa_2}} \leq \sqrt{\frac{\kappa_3}{\kappa_4}} + 6\sqrt{\frac{\kappa_4}{\kappa_3}}$ then every positive equilibrium is asymptotically stable. On the other hand, if $\sqrt{\frac{\kappa_1}{\kappa_2}} > \sqrt{\frac{\kappa_3}{\kappa_4}} + 6\sqrt{\frac{\kappa_4}{\kappa_3}}$ then the trace vanishes at exactly one positive $t$, call it $t^*$. One computes the first focal value at $t=t^*$ and finds it is negative (due to the high computational complexity, we could, unfortunately, verify this only for some specific values of $\gamma$, e.g. $1$, $3$, $4$). Therefore, the equilibrium at $t=t^*$ is asymptotically stable and the corresponding Andronov--Hopf bifurcation is supercritical (regarding $t$ as a parameter, while all the rate constants are fixed). Hence, we obtain the following qualitative pictures:

\medskip

\textbf{Case $\gamma<2$.} For $t\leq t^*$ the equilibrium is asymptotically stable, for $t>t^*$ the equilibrium is repelling, and for $t$ slightly larger than $t^*$ there exists a stable limit cycle that is born via a supercritical Andronov--Hopf bifurcation. In fact, since the system is permanent and the equilibrium is repelling for all $t>t^*$, a stable limit cycle exists for all $t>t^*$. Using MATCONT, we depicted these limit cycles in the left panel of \Cref{fig:1LC_nonhomog} with $\gamma = 1$, $\kappa_1 = 8$, $\kappa_2 = \frac{1}{8}$, $\kappa_3 = 1$, $\kappa_4 = 1$.

\medskip

\textbf{Case $\gamma>2$.} For $t\geq t^*$ the equilibrium is asymptotically stable, for $t<t^*$ the equilibrium is repelling, and for $t$ slightly smaller than $t^*$ there exists a stable limit cycle that is born via a supercritical Andronov--Hopf bifurcation. In fact, since the system is permanent and the equilibrium is repelling for all $t<t^*$, a stable limit cycle exists for all $t<t^*$. Using MATCONT, we depicted these limit cycles in the right panel of \Cref{fig:1LC_nonhomog} with $\gamma = 3$, $\kappa_1 = 16$, $\kappa_2 = \frac{1}{16}$, $\kappa_3 = 1$, $\kappa_4 = 1$.

\begin{figure}[ht]
    \centering
    \includegraphics[scale=0.229]{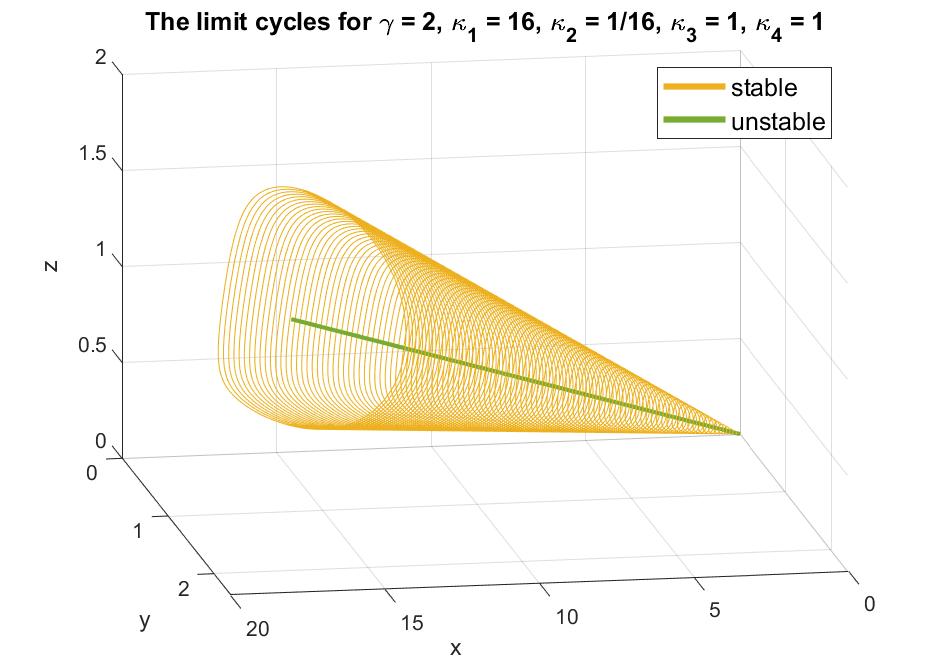}
    \caption{The stable limit cycles that are born via a supercritical Andronov--Hopf bifurcation for the mass-action system \eqref{eq:ode_1LC} (case $\gamma=2$).}
    \label{fig:1LC_homog}
\end{figure}

\begin{figure}[ht]
    \centering
    \begin{tabular}{cc}
    \includegraphics[scale=0.229]{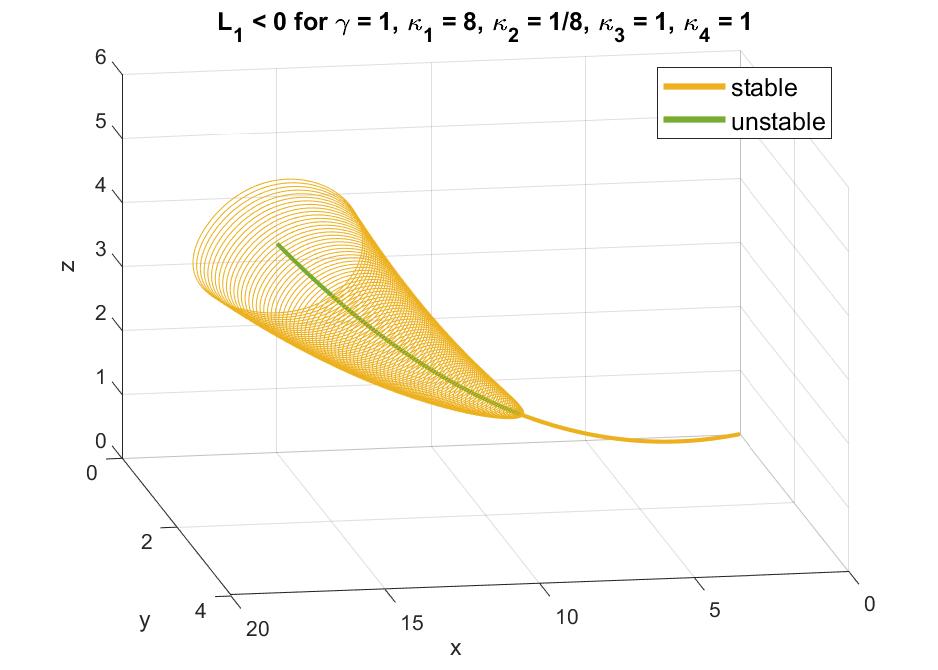} &
    \includegraphics[scale=0.229]{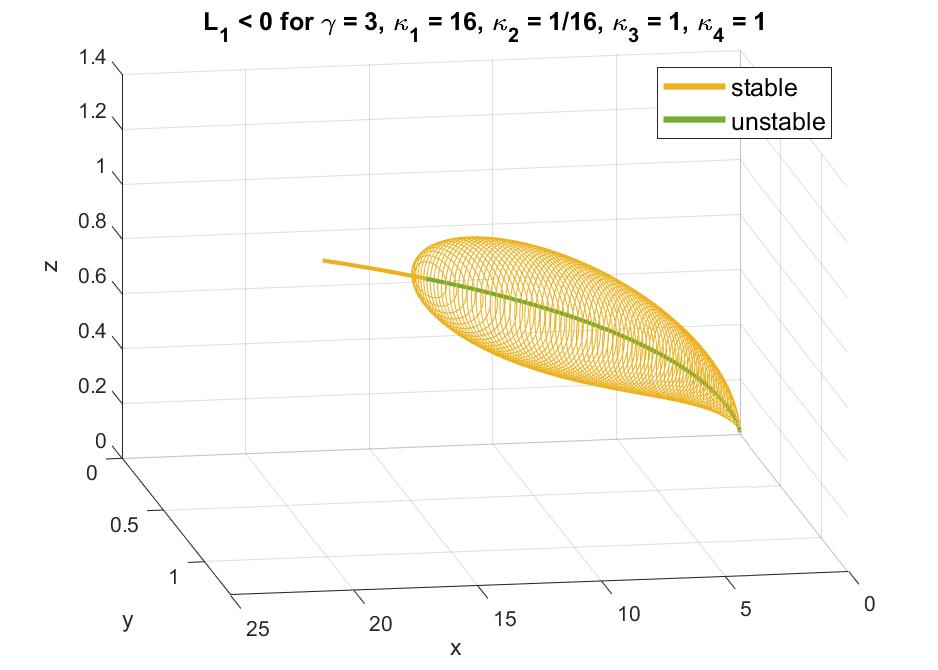}
    \end{tabular}
    \caption{The stable limit cycles that are born via a supercritical Andronov--Hopf bifurcation for the mass-action system \eqref{eq:ode_1LC} (case $\gamma<2$ on the left, case $\gamma>2$ on the right).}
    \label{fig:1LC_nonhomog}
\end{figure}
\subsection{Subcritical Andronov--Hopf bifurcation}
\label{subsec:2LC_1Hopf}

Let us take the second planar parallelogram in \Cref{fig:planar_parallelograms} and add a new species, $\mathsf{Z}$, with stoichiometric coefficient $\gamma>0$ as follows:
\begin{align}\label{eq:ode_2LC}
\begin{aligned}
\begin{tikzpicture}[scale=1.4]

\node (P1) at (0,1)  {$\mathsf{Y}+\gamma \mathsf{Z}$};
\node (P2) at (1,0)  {$\mathsf{X}+\gamma \mathsf{Z}$};
\node (P3) at (1,2)  {$\mathsf{X}+2\mathsf{Y}$};
\node (P4) at (0,3)  {$3\mathsf{Y}$};

\draw[arrows={-stealth},thick] (P1) to node[below left] {$1$} (P2);
\draw[arrows={-stealth},thick,transform canvas={xshift=3pt}] (P2) to node[right] {$a$} (P3);
\draw[arrows={-stealth},thick,transform canvas={xshift=-3pt}] (P3) to node[left] {$4a$} (P2);
\draw[arrows={-stealth},thick] (P3) to node[above right] {$1$} (P4);
\draw[arrows={-stealth},thick,transform canvas={xshift=-3pt}] (P4) to node[left] {$4b$} (P1);
\draw[arrows={-stealth},thick,transform canvas={xshift=3pt}] (P1) to node[right] {$b$} (P4);

\node at (5.5,1.5) {$\begin{aligned}
\dot{x} &= yz^\gamma - xy^2, \\
\dot{y} &= -yz^\gamma + xy^2 + 2(axz^\gamma - 4axy^2 + byz^\gamma - 4by^3), \\
\dot{z} &= \gamma(-a x z^\gamma + 4a x y^2 - b y z^\gamma + 4 b y^3),
\end{aligned}$};

\end{tikzpicture}
\end{aligned}
\end{align}
where we employed special rate constants (for $a,b>0$) and also displayed the associated mass-action differential equation. This special choice of the rate constants makes the calculations somewhat easier and they still allow us to present some qualitative pictures that were not seen in \Cref{subsec:1LC}. Note that $\gamma x + \gamma y + 2 z$ is a conserved quantity, and therefore the network is mass-conserving. By a short calculation, the set of positive equilibria is the curve
\begin{align*}
\left(2 t^\gamma,t^\gamma/2,t^2\right) \text{ for } t>0.
\end{align*}
Since the stability of a positive equilibrium within its stoichiometric class is determined by the sign of the trace of the Jacobian matrix, we compute the trace along the curve of equilibria and find it is
\begin{align} \label{eq:2LC_trace}
4t^{2\gamma}\left[\left(\frac{3}{16}-4a-b\right)-\frac{\gamma^2}{8}(4a+b)t^{\gamma-2}\right] \text{ for } t>0.
\end{align}
As a function of $t$, it behaves differently for $\gamma<2$, $\gamma=2$, and $\gamma>2$. In \Cref{subsec:2LC_homog,subsec:2LC_nonhomog} we study the cases $\gamma = 2$ and $\gamma \neq 2$, respectively.

\subsubsection{Case $\gamma=2$}
\label{subsec:2LC_homog}

First notice that for $\gamma=2$, the same way as in \Cref{subsec:1LC_homog}, the phase portrait is the same (up to scaling) in every stoichiometric class.

For $\gamma=2$, the set of positive equilibria is a half-line and the trace of the Jacobian matrix along that, formula \eqref{eq:2LC_trace}, equals $6 t^4\left(\frac18-4a-b\right)$. Thus, all positive equilibria are asymptotically stable if $4a+b>\frac18$, while all of them are repelling if $4a+b<\frac18$. On the $4a+b=\frac18$ line in parameter space, one computes the first focal value and gets
\begin{align*}
L_1 =  1280b^2-16b-7 \text{ for } 0 < b < \frac{1}{8}.
\end{align*}
Hence, with $b^* = \frac{1+\sqrt{141}}{160}\approx 0.08$, the first focal value is negative for $0<b<b^*$, vanishes at $b^*$, and is positive for $b^*<b<\frac{1}{8}$. This allows us to construct two limit cycles in each stoichiometric class in the following way. Take $a=\frac{1}{200}$ and $b=\frac{21}{200}$. Then the trace vanishes and the first focal value is positive, so the equilibrium is repelling. By permanence, there exists a stable limit cycle. By increasing $b$ a tiny bit, the trace becomes negative, and an unstable limit cycle is born via a subcritical Andronov--Hopf bifurcation. Thus, there exist $a$ and $b$ such that two limit cycles coexist. A numerical experiment suggests that these two limit cycles merge and disappear through a fold bifurcation around $b\approx\frac{23.3}{200}$. Using MATCONT, we depicted in \Cref{fig:2LC_homog} the two nested cones of limit cycles for $a = \frac{1}{200}$ and $b = \frac{22}{200}$.

\begin{figure}[ht]
    \centering
    \includegraphics[scale=0.229]{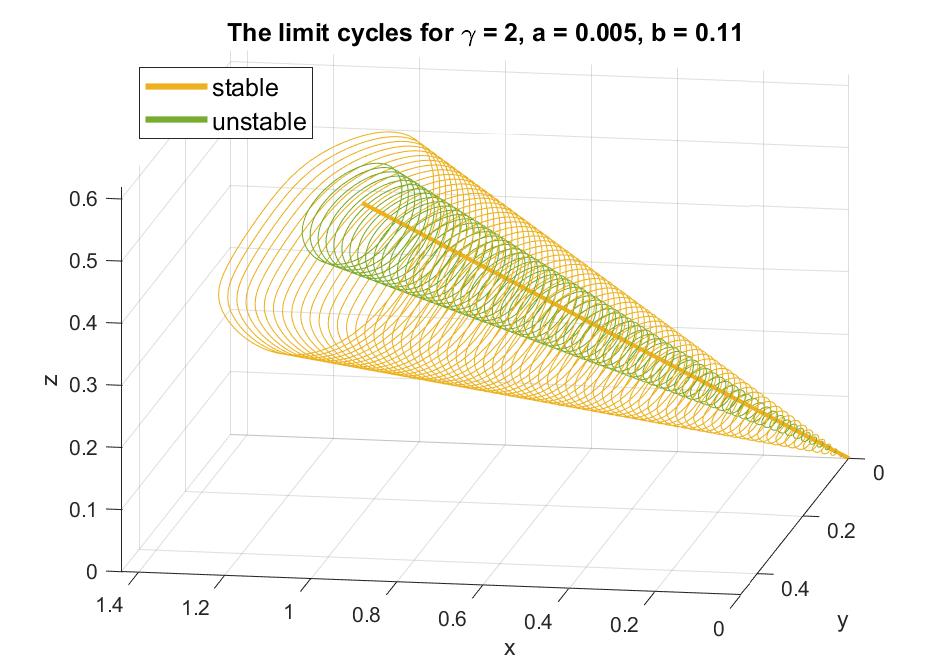}
    \caption{The two nested cones of limit cycles, the unstable limit cycles are born via a subcritical Andronov--Hopf bifurcation for the mass-action system \eqref{eq:ode_2LC} (case $\gamma=2$).}
    \label{fig:2LC_homog}
\end{figure}

\subsubsection{Case $\gamma \neq 2$}
\label{subsec:2LC_nonhomog}

When $\gamma\neq2$, for any fixed $a,b>0$ with $4a+b<\frac{3}{16}$ there exists a unique $t^*>0$ for which the trace \eqref{eq:2LC_trace} vanishes. To decide stability of the equilibrium, one computes the first focal value and finds it can have any sign, see the left (case $\gamma=1$) and right (case $\gamma=3$) panels in the top row in \Cref{fig:2LC_nonhomog}. In particular, the first focal value can be positive and the corresponding Andronov--Hopf bifurcation is then subcritical, allowing an unstable limit cycle to be born. Since the system is permanent, each unstable equilibrium and each unstable limit cycle is surrounded by a stable limit cycle. Using MATCONT, we depicted these limit cycles in the bottom row of \Cref{fig:2LC_nonhomog} for some particular choices of the parameters.

\begin{figure}[ht]
    \centering
    \begin{tabular}{cc}
    \includegraphics[scale=0.5]{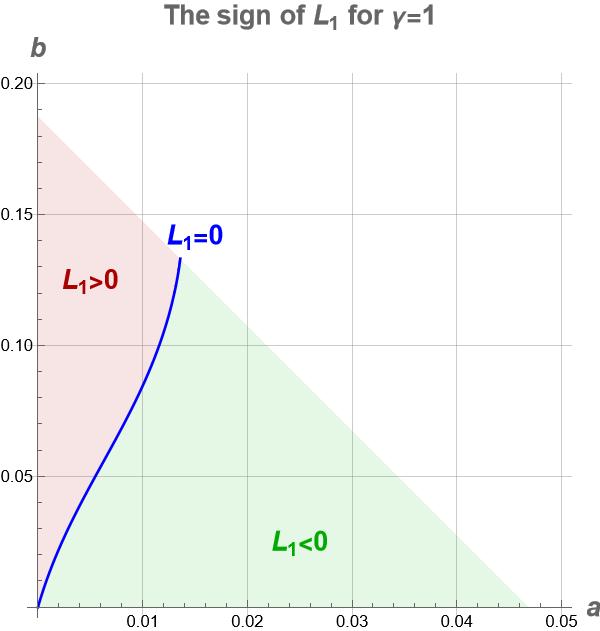} &
    \includegraphics[scale=0.5]{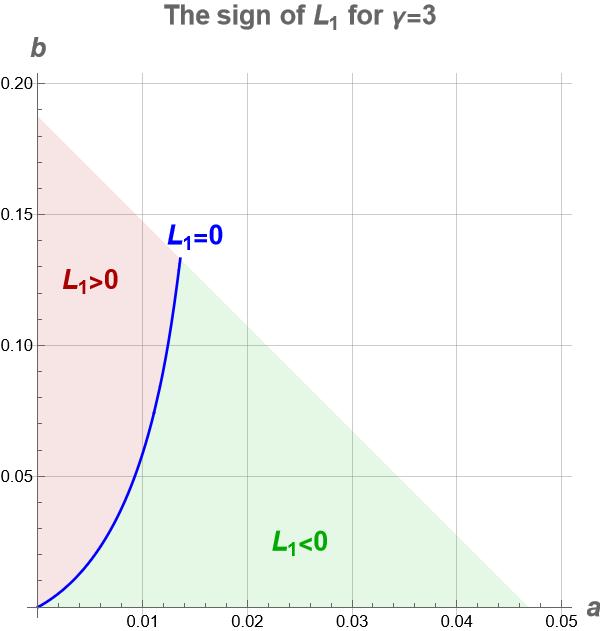} \\
    \includegraphics[scale=0.229]{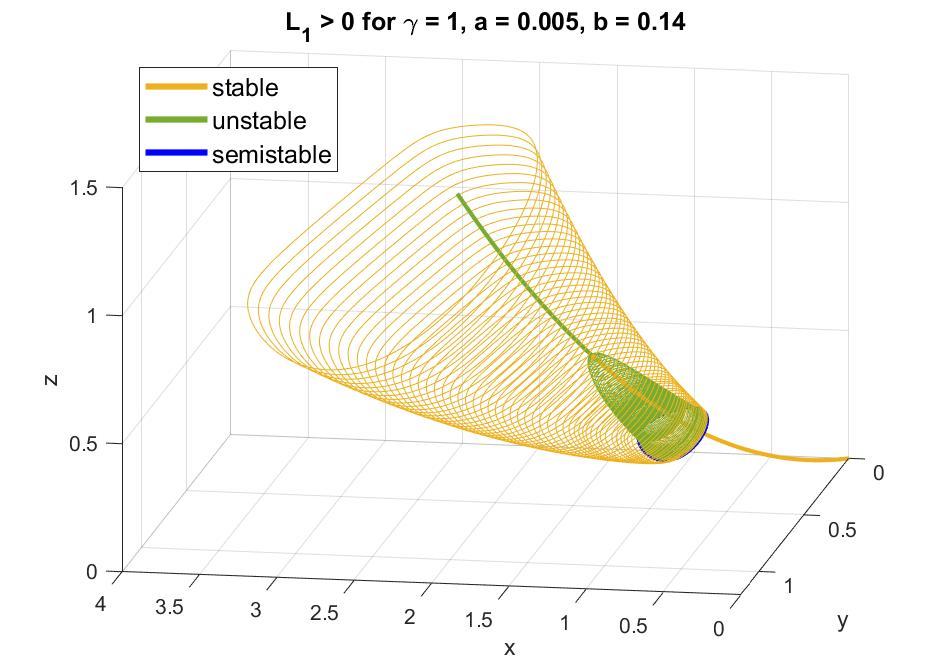} &
    \includegraphics[scale=0.229]{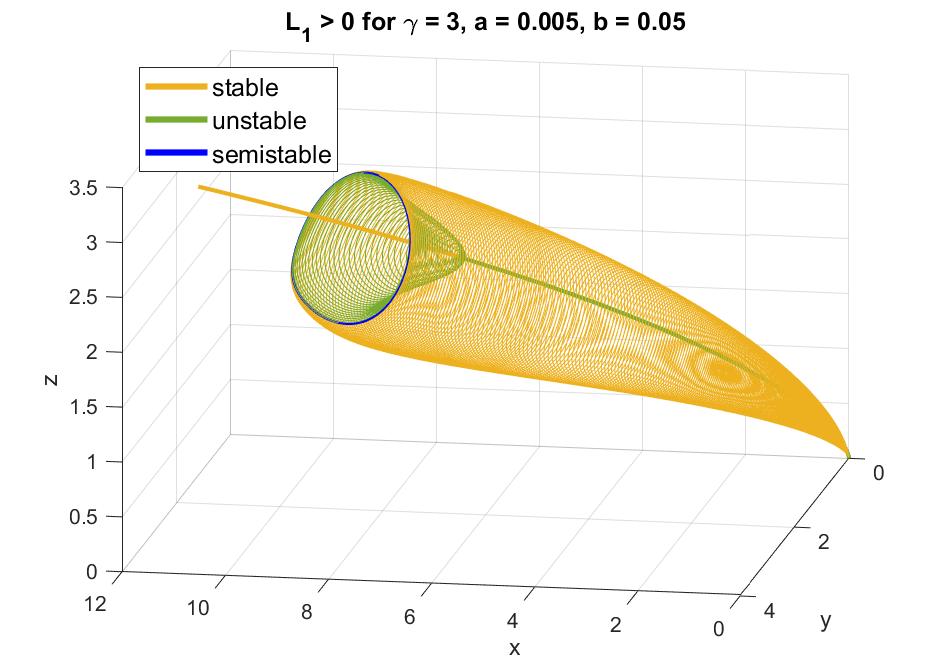}
    \end{tabular}
    \caption{Top row: the sign of the first focal value for the mass-action system \eqref{eq:ode_2LC} (case $\gamma=1$ on the left, case $\gamma=3$ on the right). Bottom row: the unstable limit cycles that are born via a subcritical Andronov--Hopf bifurcation (parameters are taken such that $L_1>0$ holds), surrounded by stable limit cycles (case $\gamma=1$ on the left, case $\gamma=3$ on the right). The stable and unstable limit cycles merge through a fold bifurcation to a semistable limit cycle (shown in blue).}
    \label{fig:2LC_nonhomog}
\end{figure}
\newpage
\subsection{Two Andronov--Hopf points}
\label{subsec:2LC_2Hopf}

Let us take the third planar parallelogram in \Cref{fig:planar_parallelograms} and add a new species, $\mathsf{Z}$, as follows:
\begin{align}\label{eq:ode_2Hopf}
\begin{aligned}
\begin{tikzpicture}[scale=1.4]

\node (P1) at (0,1)  {$\mathsf{Y}+\mathsf{Z}$};
\node (P2) at (2,0)  {$2\mathsf{X}+\mathsf{Z}$};
\node (P3) at (2,2)  {$2\mathsf{X}+2\mathsf{Y}$};
\node (P4) at (0,3)  {$3\mathsf{Y}$};

\draw[arrows={-stealth},thick] (P1) to node[below left] {$1$} (P2);
\draw[arrows={-stealth},thick,transform canvas={xshift=3pt}] (P2) to node[right] {$a$} (P3);
\draw[arrows={-stealth},thick,transform canvas={xshift=-3pt}] (P3) to node[left] {$4a$} (P2);
\draw[arrows={-stealth},thick] (P3) to node[above right] {$1$} (P4);
\draw[arrows={-stealth},thick,transform canvas={xshift=-3pt}] (P4) to node[left] {$4b$} (P1);
\draw[arrows={-stealth},thick,transform canvas={xshift=3pt}] (P1) to node[right] {$b$} (P4);

\node at (6.5,1.5) {$\begin{aligned}
\dot{x} &= 2(yz - x^2y^2), \\
\dot{y} &= -yz + x^2y^2 + 2(ax^2z - 4ax^2y^2 + byz - 4by^3), \\
\dot{z} &= -a x^2 z + 4a x^2 y^2 - b y z + 4 b y^3,
\end{aligned}$};

\end{tikzpicture}
\end{aligned}
\end{align}
where we employed special rate constants (for $a,b>0$) and also displayed the associated mass-action differential equation. Similarly to \Cref{subsec:2LC_1Hopf}, this special choice of the rate constants makes the calculations somewhat easier and they still allow us to present some qualitative pictures that were not seen in \Cref{subsec:1LC,subsec:2LC_1Hopf}. Note that $x + 2 y + 4 z$ is a conserved quantity, and therefore the network is mass-conserving. By a short calculation, the set of positive equilibria is the curve
\begin{align*}
(t,t^2/4,t^4/4) \text{ for } t>0.
\end{align*}
Since the stability of a positive equilibrium within its stoichiometric class is determined by the sign of the trace of the Jacobian matrix, we compute the trace along the curve of equilibria and find it is
\begin{align*}
\frac{t^2}{4}[-t^3+(1-4(4a+b))t^2-(4a+b)]  \text{ for } t>0.
\end{align*}
One finds that, as a function of $t$, the trace has exactly two roots if and only if $4a+b<1/16$. Call the two roots $t^{(1)}$ and $t^{(2)}$ (with $0 < t^{(1)} < t^{(2)}$; the dependence on $a$ and $b$ is omitted for readability). To understand the stability of the equilibria at $t^{(1)}$ and $t^{(2)}$, one computes the respective first focal values $L_1^{(1)}$ and $L_1^{(2)}$. Their sign is shown in the left panel of \Cref{fig:prlgrm_wide_bifurcation}, the generic cases are
\begin{itemize}
\item both of $L_1^{(1)}$ and $L_1^{(2)}$ are negative (first row in \Cref{fig:wide}),
\item $L_1^{(1)}$ is negative and $L_1^{(2)}$ is positive (left panel in the second row in \Cref{fig:wide}),
\item both of $L_1^{(1)}$ and $L_1^{(2)}$ are positive (right panel in the second row in \Cref{fig:wide}).
\end{itemize}

\begin{figure}[h]
\centering
\begin{tabular}{cc}
\includegraphics[scale=0.5]{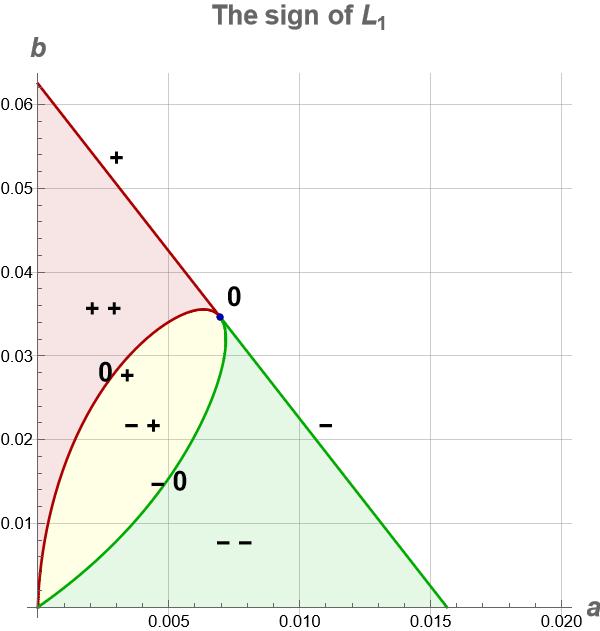} &
\includegraphics[scale=0.5]{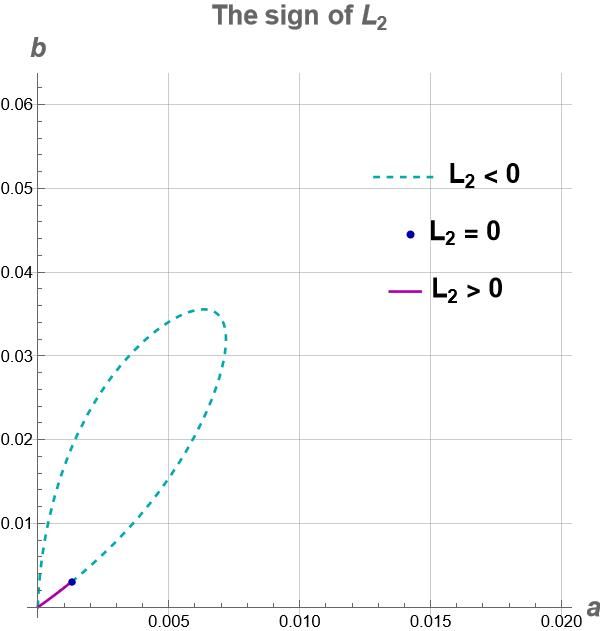}
\end{tabular}
\caption{Left panel: the sign of the first focal values $L_1^{(1)}$ and $L_1^{(2)}$ for $4a+b \leq 1/16$ for the mass-action system \eqref{eq:ode_2Hopf} (since $t^{(1)}$ and $t^{(2)}$ coincide if $4a+b=1/16$, we displayed only one sign there). Right panel: the sign of $L_2$ along the curve $L_1=0$. The third focal value is negative at the unique $(a,b,t)\in\mathbb{R}^3_+$, where $\tr J = L_1 = L_2 = 0$.}
\label{fig:prlgrm_wide_bifurcation}
\end{figure}

\newpage
In the boundary case $4a+b=1/16$, we have $t^{(1)}=t^{(2)}=1/2$. If additionally the first focal value is positive (take for instance $a=1/256$ and $b=12/256$) then the trace, as a function of $t$, vanishes, but does not change sign at $1/2$. The equilibrium at $t=1/2$ is repelling, since the first focal value is positive. Furthermore, since this is the limiting case of $L_1^{(1)}>0$ and $L_1^{(2)}>0$, both for $t$ slightly smaller and slightly larger than $1/2$, the stable equilibrium is surrounded by an unstable limit cycle. Furthermore, by permanence, any unstable equilibrium and any unstable limit cycle is surrounded by a stable limit cycle. Using MATCONT, the picture we get in this case is shown in the left panel in the third row in \Cref{fig:wide}.

Perhaps the most interesting qualitative picture in this paper is obtained by taking parameter values $a$ and $b$ as in the previous paragraph and then perturbing them slightly to make $4a+b>1/16$. Then every positive equilibrium is asymptotically stable, but not all of them are globally stable within their stoichiometric classes, because a torus of stable and unstable limit cycles is created around the curve of positive equilibria as shown in the right panel in the third row in \Cref{fig:wide}.

We conclude this section by explaining how can one construct even three limit cycles. Viewing now $(a,b,t)\in\mathbb{R}^3_+$ as parameter, the trace of the Jacobian matrix vanishes along a surface $\mathcal{M}$. On this surface, there is a curve $\gamma$, where the first focal value vanishes (the projection of this curve to the $(a,b)$-plane is shown in \Cref{fig:prlgrm_wide_bifurcation}). Along this curve, there is a point $(a^*,b^*,t^*)\approx(0.001291,0.003044,0.958228)$, where the second focal value changes sign (this is shown in the right panel in \Cref{fig:prlgrm_wide_bifurcation}). One computes the third focal value at $(a^*,b^*,t^*)$ and finds it is negative. The equilibrium is therefore asymptotically stable. Then do the following steps.
\begin{enumerate}[(i)]
\item First perturb $(a,b,t)$ slightly along the curve $\gamma$ such that the second focal value becomes positive. Then a stable limit cycle $\Gamma_3$ is born via a degenerate supercritical Andronov--Hopf bifurcation and the equilibrium becomes repelling.
\item Then perturb $(a,b,t)$ slightly along the surface $\mathcal{M}$ such that the first focal value becomes negative. Then an unstable limit cycle $\Gamma_2$ is born via a degenerate subcritical Andronov--Hopf bifurcation and the equilibrium becomes asymptotically stable.
\item Finally perturb $(a,b,t)$ slightly away from the surface $\mathcal{M}$ such that the trace of the Jacobian matrix becomes positive. Then a stable limit cycle $\Gamma_1$ is born via a nondegenerate supercritical Andronov--Hopf bifurcation and the equilibrium becomes repelling.
\end{enumerate}

We have thus proved that there exists an $(a,b,t)$ such that the unique positive equilibrium is repelling and is surrounded by three limit cycles (two stable and one unstable one). It would be interesting to continue these limit cycles in MATCONT as $t$ is varied, however, apparently the three limit cycles coexist only in a very tiny region of the parameter space, and thus, it is numerically not easy to create meaningful pictures.

\begin{figure}[!t]
    \centering
    \begin{tabular}{cc}
    \multicolumn{2}{c}{\includegraphics[scale=0.229]{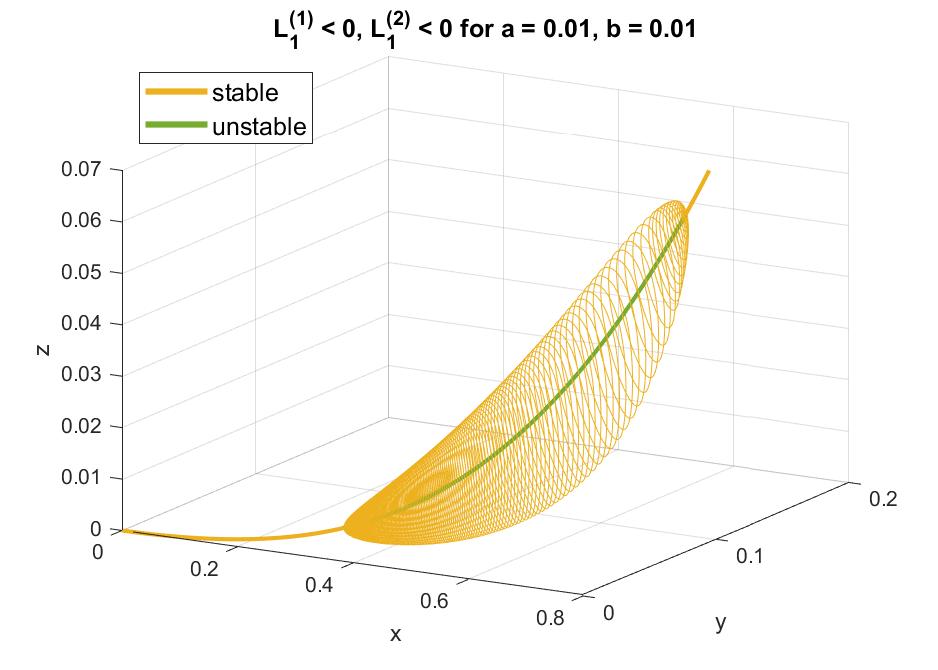}} \\
    \includegraphics[scale=0.229]{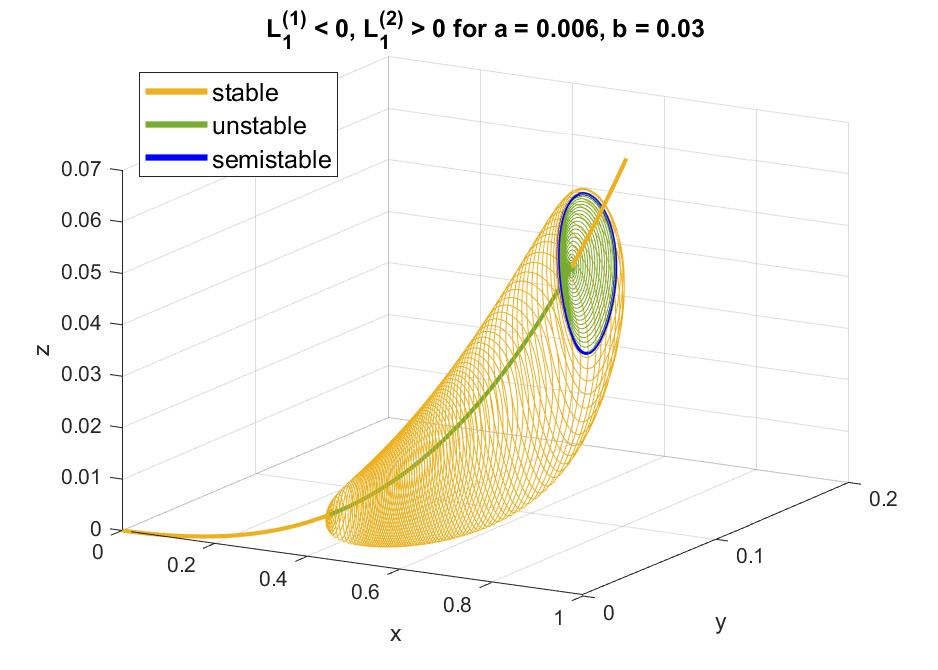} &
    \includegraphics[scale=0.229]{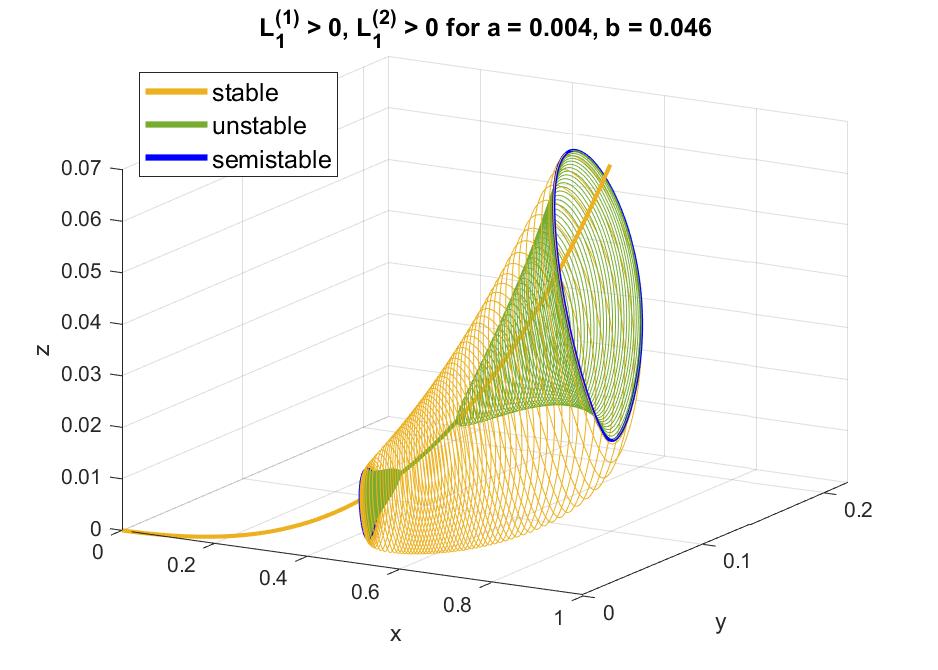} \\
    \includegraphics[scale=0.229]{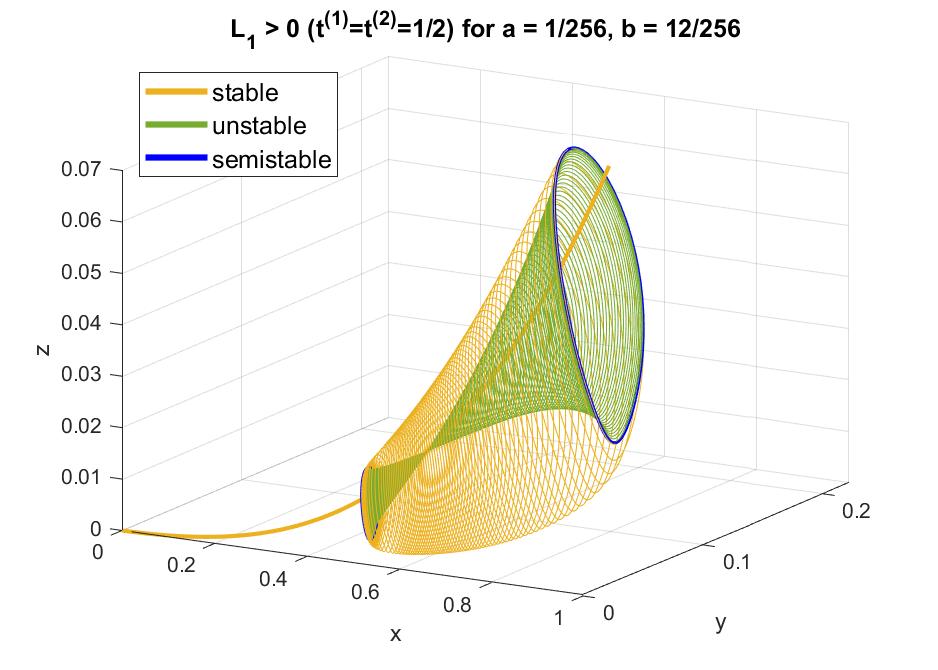} &
    \includegraphics[scale=0.229]{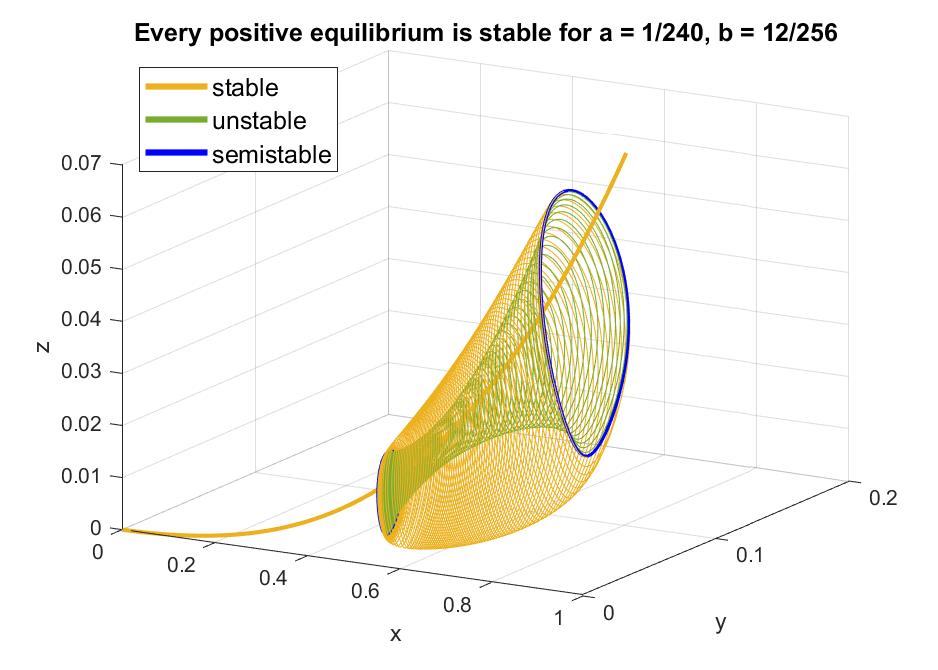}
    \end{tabular}
    \caption{The various shapes that the stable and unstable limit cycles could form for the mass-action system \eqref{eq:ode_2Hopf}. The most interesting is the last one, where all the positive equilibria are asymptotically stable, but not all of them are globally stable, because a torus of limit cycles surrounds the curve of positive equilibria.}
    \label{fig:wide}
\end{figure}
\clearpage
\section{Bimolecular networks and limit cycles}
\label{sec:bimolecular}

The \emph{molecularity} of a complex $y \in \mathbb{Z}^n_{\geq0}$ (or $y_1\mathsf{X}_1 + \cdots + y_n\mathsf{X}_n$) is the sum $y_1 + \cdots + y_n \in \mathbb{Z}_{\geq0}$. We say that a reaction network $(V,E)$ (or a mass-action system $(V,E,\kappa)$) is \emph{bimolecular} if the molecularity of each element of $V$ is at most two. None of the examples in \Cref{sec:parallelograms} is bimolecular. In this section we prove that for a rank-two bimolecular reaction network the associated mass-action system does not admit a limit cycle. For $2$ and $3$ species this was proven by P\'ota in \cite{pota:1983} and \cite{pota:1985}, respectively. \Cref{thm:no_limit_cycle_in_2d} below is an extension of these results to arbitrary number of species. On the other hand, there are two famous bimolecular mass-action systems of rank two that do oscillate: the Lotka and the Ivanova networks give rise to centers.

The Lotka network and its associated mass-action system are
\begin{center}
\begin{tikzpicture}[scale=1.4]

\node (P1) at (0,0)    {$\mathsf{X}+\mathsf{Y}$};
\node (P2) at (1.5,0)    {$2\mathsf{Y}$};
\node (P3) at (0,-2/3) {$\mathsf{X}$};
\node (P4) at (1.5,-2/3) {$2\mathsf{X}$};
\node (P5) at (0,-4/3)   {$\mathsf{Y}$};
\node (P6) at (1.5,-4/3)   {$\mathsf{0}$};

\draw[arrows={-stealth},thick] (P1) to node[above] {$\kappa_1$} (P2);
\draw[arrows={-stealth},thick] (P3) to node[above] {$\kappa_2$} (P4);
\draw[arrows={-stealth},thick] (P5) to node[above] {$\kappa_3$} (P6);

\node at (3.5,-2/3) {$\begin{aligned}
\dot{x} &= \kappa_2 x - \kappa_1 x y, \\
\dot{y} &= \kappa_1 x y - \kappa_3 y.
\end{aligned}$};

\end{tikzpicture}
\end{center}
Notice however that the mass-action system
\begin{center}
\begin{tikzpicture}[scale=1.6]

\node (PXY) at (1,1) {$\mathsf{X}+\mathsf{Y}$};
\node (P2Y) at (0,2) {$2\mathsf{Y}$};
\node (PX)  at (1,0) {$\mathsf{X}$};
\node (P2X) at (2,0) {$2\mathsf{X}$};
\node (PY)  at (0,1) {$\mathsf{Y}$};
\node (P0)  at (0,0) {$\mathsf{0}$};

\draw[arrows={-stealth},thick] (PXY) to node[above right] {$\kappa_1$} (P2Y);
\draw[arrows={-stealth},thick] (PXY) to node[above right] {$\widetilde{\kappa}_1$} (P2X);
\draw[arrows={-stealth},thick] (PXY) to node[above] {$\lambda$} (P0);
\draw[arrows={-stealth},thick] (PXY) to node[right] {$\mu$} (PX);
\draw[arrows={-stealth},thick] (PXY) to node[above] {$\nu$} (PY);
\draw[arrows={-stealth},thick] (PX) to node[below] {$\kappa_2$} (P2X);
\draw[arrows={-stealth},thick] (PX) to node[below] {$\widetilde{\kappa}_2$} (P0);
\draw[arrows={-stealth},thick] (PY) to node[left] {$\kappa_3$} (P0);
\draw[arrows={-stealth},thick] (PY) to node[left] {$\widetilde{\kappa}_3$} (P2Y);

\node at (5,1) {$\begin{aligned}
\dot{x} &= (\kappa_2 - \widetilde{\kappa}_2)x - (\kappa_1 -\widetilde{\kappa}_1 + \lambda + \nu) x y, \\
\dot{y} &= (\kappa_1 -\widetilde{\kappa}_1 - \lambda - \mu) x y - (\kappa_3 - \widetilde{\kappa}_3) y
\end{aligned}$};

\end{tikzpicture}
\end{center}
also gives rise to a center, provided that
\begin{align*}
\sgn(\kappa_1 - \widetilde{\kappa}_1+\lambda+\nu) = \sgn(\kappa_1 - \widetilde{\kappa}_1-\lambda-\mu) = \sgn(\kappa_2 - \widetilde{\kappa}_2) = \sgn(\kappa_3 - \widetilde{\kappa}_3) \neq 0.
\end{align*}
In the above, we allow some of $\kappa_1$, $\widetilde{\kappa}_1$, $\kappa_2$, $\widetilde{\kappa}_2$, $\kappa_3$, $\widetilde{\kappa}_3$, $\lambda$, $\mu$, $\nu$ to vanish, in which case the corresponding reaction is not present. Finally, we note that the sum of the quadratic terms in $\dot{x}$ and $\dot{y}$ is nonpositive. This is because for a bimolecular reaction network in any reaction the molecularity of the  product complex cannot be larger than the  molecularity of the reactant complex if the latter equals two.

The Ivanova network and its associated mass-action system are
\begin{center}
\begin{tikzpicture}[scale=1.4]

\node (P1) at (0,0)    {$\mathsf{X}+\mathsf{Y}$};
\node (P2) at (1.5,0)    {$2\mathsf{Y}$};
\node (P3) at (0,-2/3) {$\mathsf{X}+\mathsf{Z}$};
\node (P4) at (1.5,-2/3) {$2\mathsf{X}$};
\node (P5) at (0,-4/3)   {$\mathsf{Y}+\mathsf{Z}$};
\node (P6) at (1.5,-4/3)   {$2\mathsf{Z}$};

\draw[arrows={-stealth},thick] (P1) to node[above] {$\kappa_1$} (P2);
\draw[arrows={-stealth},thick] (P3) to node[above] {$\kappa_2$} (P4);
\draw[arrows={-stealth},thick] (P5) to node[above] {$\kappa_3$} (P6);

\node at (3.5,-2/3) {$\begin{aligned}
\dot{x} &= \kappa_2 x z - \kappa_1 x y, \\
\dot{y} &= \kappa_1 x y - \kappa_3 y z, \\
\dot{z} &= \kappa_3 y z - \kappa_2 x z.
\end{aligned}$};

\end{tikzpicture}
\end{center}
Notice however that the mass-action system
\begin{center}
\begin{tikzpicture}[scale=1.6]

\node (PXY) at (1,1) {$\mathsf{X}+\mathsf{Y}$};
\node (P2Y) at (0,2) {$2\mathsf{Y}$};
\node (PXZ)  at (1,0) {$\mathsf{X}+\mathsf{Z}$};
\node (P2X) at (2,0) {$2\mathsf{X}$};
\node (PYZ)  at (0,1) {$\mathsf{Y}+\mathsf{Z}$};
\node (P2Z)  at (0,0) {$2\mathsf{Z}$};

\draw[arrows={-stealth},thick] (PXY) to node[above right] {$\kappa_1$} (P2Y);
\draw[arrows={-stealth},thick] (PXY) to node[above right] {$\widetilde{\kappa}_1$} (P2X);
\draw[arrows={-stealth},thick] (PXZ) to node[below] {$\kappa_2$} (P2X);
\draw[arrows={-stealth},thick] (PXZ) to node[below] {$\widetilde{\kappa}_2$} (P2Z);
\draw[arrows={-stealth},thick] (PYZ) to node[left] {$\kappa_3$} (P0);
\draw[arrows={-stealth},thick] (PYZ) to node[left] {$\widetilde{\kappa}_3$} (P2Y);

\node at (4.5,1) {$\begin{aligned}
\dot{x} &= (\kappa_2 - \widetilde{\kappa}_2) x z - (\kappa_1 -\widetilde{\kappa}_1) x y, \\
\dot{y} &= (\kappa_1 -\widetilde{\kappa}_1) x y - (\kappa_3 - \widetilde{\kappa}_3) y z, \\
\dot{z} &= (\kappa_3 -\widetilde{\kappa}_3) y z - (\kappa_2 - \widetilde{\kappa}_2) x z
\end{aligned}$};

\end{tikzpicture}
\end{center}
also gives rise to a center, provided that
\begin{align*}
\sgn(\kappa_1 - \widetilde{\kappa}_1) = \sgn(\kappa_2 - \widetilde{\kappa}_2) = \sgn(\kappa_3 - \widetilde{\kappa}_3) \neq 0.
\end{align*}
In the above, we allow some of $\kappa_1$, $\widetilde{\kappa}_1$, $\kappa_2$, $\widetilde{\kappa}_2$, $\kappa_3$, $\widetilde{\kappa}_3$ to vanish, in which case the corresponding reaction is not present. Interestingly, the Ivanova network can be obtained from the Lotka network by adding a new species, $\mathsf{Z}$, in a way that the molecularity of every complex becomes two.

Observe that there is another way for a rank-two bimolecular mass-action system to admit periodic solutions:
\begin{center}
\begin{tikzpicture}[scale=1.4]

\node (P1) at (0,0)    {$\mathsf{X}+\mathsf{Y}$};
\node (P2) at (1.5,0)    {$2\mathsf{Y}$};
\node (P3) at (0,-2/3) {$\mathsf{X}$};
\node (P4) at (1.5,-2/3) {$2\mathsf{X}$};
\node (P5) at (0,-4/3)   {$\mathsf{Y}$};
\node (P6) at (1.5,-4/3)   {$\mathsf{0}$};
\node (P7) at (0,-2)   {$\mathsf{X}+\mathsf{Z}$};
\node (P8) at (1.5,-2)   {$\mathsf{Z}$};

\draw[arrows={-stealth},thick] (P1) to node[above] {$\kappa_1$} (P2);
\draw[arrows={-stealth},thick] (P3) to node[above] {$\kappa_2$} (P4);
\draw[arrows={-stealth},thick] (P5) to node[above] {$\kappa_3$} (P6);
\draw[arrows={-stealth},thick] (P7) to node[above] {$\kappa_4$} (P8);

\node at (4.5,-1) {$\begin{aligned}
\dot{x} &= (\kappa_2 - \kappa_4 z) x - \kappa_1 x y, \\
\dot{y} &= \kappa_1 x y - \kappa_3 y, \\
\dot{z} &= 0.
\end{aligned}$};

\end{tikzpicture}
\end{center}
Here, the positive stoichiometric classes are given by $z=c$ for some $c>0$. For $c<\frac{\kappa_2}{\kappa_4}$, the unique positive equilibrium is a global center (the dynamics is the same as for the Lotka network), while for $c \geq \frac{\kappa_2}{\kappa_4}$ there is no positive equilibrium. 

The following theorem says that essentially the Lotka and the Ivanova differential equations are the only ones with a periodic solution that are derived from a rank-two bimolecular reaction network. 
\begin{theorem} \label{thm:no_limit_cycle_in_2d}
Suppose that a rank-two bimolecular mass-action system has a periodic solution in a positive stoichiometric class $\mathcal{P}$. Then the dynamics in $\mathcal{P}$ is described either by
\begin{align*}
\dot{x} &= x(a-by),\\
\dot{y} &= y(b'x-c)
\end{align*}
for some $a,b,b',c\in\mathbb{R}$ with $\sgn a = \sgn b = \sgn b' = \sgn c \neq 0$ and $b'\leq b$ or by
\begin{align*}
\dot{x} &= x(az-by),\\
\dot{y} &= y(bx-cz),\\
\dot{z} &= z(cy-ax)
\end{align*}
for some $a,b,c\in\mathbb{R}$ with $\sgn a = \sgn b = \sgn c \neq 0$. In particular, there is a unique positive equilibrium in $\mathcal{P}$, every non-equilibrium solution is periodic, and there is no limit cycle.
\end{theorem}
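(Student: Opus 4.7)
The plan is to use a Dulac-type argument on the restricted dynamics in $\mathcal{P}$. Since $\dim\mathcal{S}=2$, the class $\mathcal{P}$ is a two-dimensional manifold on which the mass-action vector field $f$ is everywhere tangent. Choosing any basis of $\mathcal{S}$ and parametrizing $\mathcal{P}$ by the corresponding coordinates yields a planar ODE whose right-hand side is a polynomial of total degree at most two in the species concentrations, by bimolecularity.

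The first key step is to compute the divergence of $Bf$ on $\mathcal{P}$ with the Volterra Dulac factor $B(x)=\prod_i x_i^{-1}$, chosen because both the Lotka and the Ivanova systems are divergence-free with respect to $B$. I would organize the computation reaction by reaction: the contribution of a reaction $y\to y'$ with rate constant $\kappa$ reduces to a multiple of $\kappa\, x^{y}\prod_i x_i^{-1}$ times $\sum_{i}(y_i-1)(y_i'-y_i)$, read after projection onto $\mathcal{S}$. Using $|y|,|y'|\le 2$ and $y,y'\in\mathbb{Z}^n_{\ge 0}$, a case analysis on $|y|\in\{0,1,2\}$ shows each such contribution is nonpositive. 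Equality is very restrictive: it forces the reactant complex either to be linear or to be of the form $e_i+e_j$ with product complex $2e_i$ or $2e_j$, i.e.\ a catalytic autocatalysis.

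From $\diverg_{\mathcal{P}}(Bf)\le 0$, the Bendixson--Dulac criterion immediately rules out limit cycles in $\mathcal{P}$. A periodic orbit can still exist only if the divergence vanishes along it, and by algebraic nondegeneracy this forces the divergence to vanish identically on $\mathcal{P}$. I would then enumerate all bimolecular reaction networks all of whose reactions lie on the ``equality list'' and whose stoichiometric subspace has dimension exactly two. The enumeration yields exactly two families of ODEs up to relabelling of species: the generalized Lotka system on two species (augmented by linear and source/sink reactions), and the generalized Ivanova cycle on three species. The bimolecularity inequality $b'\le b$ in the Lotka case falls out of summing the individual stoichiometric contributions of all reactions emanating from the complex $\mathsf{X}+\mathsf{Y}$: each such bimolecular reaction that affects the coefficient of $xy$ contributes at most as much to $\dot{y}$ as it subtracts from $\dot{x}$.

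The main obstacle will be the combinatorial classification, ruling out exotic rank-two combinations of equality-contributing reactions that one might otherwise overlook. Once the network is reduced to Lotka or Ivanova form, the explicit first integral $V(x,y)=b'x-c\log x+by-a\log y$ (and its three-species analogue in the Ivanova case) confirms that every nonequilibrium orbit is periodic; the sign conditions on the coefficients are precisely what is required for the critical point of $V$ to lie in the positive orthant and for its Hessian to be definite there, so that the level sets near the equilibrium are closed curves.
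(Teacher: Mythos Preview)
Your strategy---use the Dulac factor $B=\prod_i x_i^{-1}$, show $\diverg(Bf)\le 0$, and infer from the existence of a periodic orbit that the divergence vanishes identically---is exactly the paper's approach. A few points deserve correction or comparison.

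First, your reaction-by-reaction formula is not right: the contribution of $y\to y'$ to $\diverg(Bf)$ is $\kappa\,x^{y-\mathbf 1}\sum_k (y_k-1)(y'_k-y_k)\,x_k^{-1}$, not a scalar multiple of $\sum_k(y_k-1)(y'_k-y_k)$. Fortunately each summand $(y_k-1)(y'_k-y_k)$ is separately $\le 0$ under bimolecularity, so the sign conclusion survives. Second, you propose to pass to two planar coordinates on $\mathcal P$ and apply the ordinary Bendixson--Dulac test, but you never explain why the planar divergence of $Bf|_{\mathcal P}$ coincides with the ambient $n$-dimensional divergence you actually compute. The paper sidesteps this by invoking a multidimensional Bendixson--Dulac theorem (Schneider for $n=3$, Li in general); alternatively one can argue directly that $Bf\in\mathcal S$ everywhere forces $\tr D(Bf)=\tr D(Bf)|_{\mathcal S}$ in orthonormal coordinates adapted to $\mathcal S$.

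Third, your ``equality list'' is incomplete: for reactant $e_i+e_j$ the products $0,e_i,e_j$ also give zero contribution, and a linear reactant $e_j$ only gives zero if the product lies in $\{0,e_j,2e_j\}$. The paper avoids this enumeration altogether by grouping $\dot x_k$ by its degree in $x_k$ as $a_k(x_{-k})+x_kb_k(x_{-k})+c_kx_k^2$ with $a_k\ge 0$ and $c_k\le 0$; then $\diverg(Bf)\equiv 0$ immediately forces $a_k\equiv 0$ and $c_k=0$, i.e.\ diagonal-free Lotka--Volterra form. The classification you call ``the main obstacle'' then becomes a short case analysis on $n$: for $n=2$ one reads off generalized Lotka; for $n=3$ any nonzero $d\in\mathcal S^\perp$ must have full support, which kills the constant terms and (via $\dot x_1+\dot x_2+\dot x_3\le 0$ along a periodic orbit) forces antisymmetry, yielding Ivanova; for $n\ge 4$ one picks a minimal-support conservation law with at most three nonzero entries and derives a contradiction. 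Your sketch does not indicate how you would handle $n\ge 4$, and that step is where most of the remaining work lies.
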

\begin{proof}
For $k=1,\ldots,n$ we collect the terms in $\dot{x}_k$ according to their degree in $x_k$. To ease the notation, let $x_{-k}=(x_1, \dots, x_{k-1}, x_{k+1}, \dots , x_n) \in {\mathbb R}^{n-1}$ for $x \in \mathbb{R}^n$. With this, 
\begin{align*}
\dot{x}_k = a_k(x_{-k}) + x_k b_k(x_{-k}) + c_k x_k^2 \text{ for } k = 1,\ldots,n,
\end{align*}
where the function $a_k$ collects all the terms without $x_k$, the middle term collects all the terms, where $x_k$ appears linearly, and finally, the only term which is quadratic in $x_k$ is denoted by $c_k x_k^2$. Note that $a_k(x_{-k})\geq0$, because in a reaction with $\mathsf{X}_k$ not being a reactant species, $\mathsf{X}_k$ can only be gained. Also, $c_k\leq0$, since $\mathsf{X}_k$ can only be consumed in a bimolecular reaction whose reactant complex is $2\mathsf{X}_k$.

After dividing the vectorfield by $\prod_{i=1}^nx_i$, the divergence equals to
\begin{align*}
\frac{1}{\prod_{i=1}^nx_i}\sum_{k=1}^n\left(-\frac{a_k(x_{-k})}{x_k}+c_k x_k\right),
\end{align*}
which is nonpositive in $\mathbb{R}^n_+$ by the above discussion. By a multidimensional version of the Bendixson--Dulac test, the existence of a periodic solution together with $\dim \mathcal{P} = 2$ implies that the divergence vanishes everywhere, see e.g.\ \cite[Satz 1]{schneider:1969} for the $n=3$ case and \cite[Theorem 3.3, Remark (3)]{li:1995} for the general case. Therefore, both $a_k(x_{-k})$ and $c_k$ vanish for all $k = 1, \ldots, n$ and
\begin{align} \label{eq:LV}
\dot{x}_k = x_k b_k(x_{-k}) = x_k \left(r_k + \sum_{i\neq k}b_{ki}x_i\right) \text{ for } k = 1, \ldots, n.
\end{align}

If there exists a $k'$ such that $r_{k'} = 0$ and $b_{k'i}=0$ for all $i\neq k'$ then $\dot{x}_{k'} = 0$ and thus $x_{k'}\equiv d$ for some $d>0$. In this case, for each $k \neq k'$ we update $r_k$ to be $r_k + b_{kk'}d$ and omit the equation for $\dot{x}_{k'}$. For the rest of this proof, we assume that the differential equation \eqref{eq:LV} is such that
\begin{align} \label{eq:no_dummy}
\text{for all $k = 1, \ldots, n$ at least one of the $n$ numbers $r_k$ and $b_{ki}$ ($i\neq k$) is nonzero.}
\end{align}

\textbf{Case $n=2$.} The differential equation \eqref{eq:LV} takes the form
\begin{align*}
    \dot{x}_1 &= x_1(r_1 + b_{12}x_2),\\
    \dot{x}_2 &= x_2(r_2 + b_{21}x_1).
\end{align*}
Since there exists a periodic solution, $\sgn r_1 = -\sgn b_{12} \neq 0$ and $\sgn r_2 = -\sgn b_{21} \neq 0$ follow. Furthermore, $\sgn b_{12} = -\sgn b_{21}$ (otherwise the unique positive equilibrium inside the closed orbit would be a saddle with index $-1$, a contradiction). Finally, $b_{12}+b_{21}\leq 0$ follows from the assumption that every product complex has molecularity at most two.

\textbf{Case $n=3$.} The differential equation \eqref{eq:LV} takes the form
\begin{align}\label{eq:LV_n3}
\begin{split}
\dot{x}_1 &= x_1(r_1 + b_{12}x_2 + b_{13}x_3),\\
\dot{x}_2 &= x_2(r_2 + b_{21}x_1 + b_{23}x_3),\\
\dot{x}_3 &= x_3(r_3 + b_{31}x_1 + b_{32}x_2).
\end{split}
\end{align}
Since the stoichiometric subspace is two-dimensional, there exists a $d\in\mathbb{R}^3\setminus\{0\}$ such that $d_1 \dot{x}_1 + d_2 \dot{x}_2 + d_3 \dot{x}_3 = 0$. If exactly one coordinate of $d$ is nonzero, say $d_1$, then $\dot{x}_1 = 0$, contradicting the assumption \eqref{eq:no_dummy}. If exactly two coordinates of $d$ are nonzero, say $d_1$ and $d_2$, then $r_1 = r_2 = b_{13} = b_{23} = 0$ follows. Taking also into account \eqref{eq:no_dummy}, $b_{12}\neq 0$ and $b_{21} \neq 0$ hold. But then both $x_1$ and $x_2$ are strictly monotonic over time, contradicting the existence of a periodic solution. Thus, each of $d_1, d_2, d_3$ is nonzero. Hence, $d_1 \dot{x}_1 + d_2 \dot{x}_2 + d_3 \dot{x}_3 = 0$ implies that $r_1 = r_2 = r_3 = 0$. Furthermore, the existence of a periodic solution and the assumption \eqref{eq:no_dummy} together imply $\sgn b_{12} = -\sgn b_{13} \neq 0$, $\sgn b_{21} = -\sgn b_{23} \neq 0$, and $\sgn b_{31} = -\sgn b_{32} \neq 0$. Since for reactions with reactant complexes $\mathsf{X}_1 + \mathsf{X}_2$, $\mathsf{X}_1 + \mathsf{X}_3$, or $\mathsf{X}_2 + \mathsf{X}_3$, the molecularity cannot increase (by the assumption that there is no product complex with molecularity higher than two), it follows that $\dot{x}_1 + \dot{x}_2 + \dot{x}_3 \leq 0$. Thus, along a periodic solution, $\dot{x}_1 + \dot{x}_2 + \dot{x}_3 = 0$, implying $b_{12} = -b_{21}$, $b_{13} = -b_{31}$, and $b_{23} = -b_{32}$.

\textbf{Case $n \geq 4$.} We now show that this case is actually empty. Since the stoichiometric subspace is $2$-dimensional, the linear conservation laws form an $(n-2)$-dimensional subspace of $\mathbb{R}^n$. In such a subspace there always exists a nonzero vector $d$ with at most three nonzero entries. Assume that $d$ has a support that is minimal w.r.t.\ inclusion. If $d$ has exactly one or two nonzero entries then we arrive at a contradiction in the same way as in the $n=3$ case above. If $d$ has exactly three nonzero entries, say $d_1$, $d_2$, $d_3$, then $(x_1,x_2,x_3)$ evolves according to the equations \eqref{eq:LV_n3}, because $b_{1i} = b_{2i} = b_{3i} = 0$ for all $i = 4, \ldots, n$ follows. By the minimality of the support of $d$, these three variables already occupy two dimensions, and thus $\dot{x}_4 = \cdots = \dot{x}_n = 0$, contradicting the assumption \eqref{eq:no_dummy}.
\end{proof}

\Cref{thm:no_limit_cycle_in_2d} shows that the rank of a bimolecular mass-action system with a limit cycle is at least three. We analyse some simple bimolecular oscillators of rank three in \cite{boros:hofbauer:2022b}.

We conclude with a remark. For bimolecular networks we required that every complex's molecularity is at most two. If one relaxes this and imposes only that every reactant complex's molecularity is at most two, while it is allowed to have a product complex with molecularity three then limit cycles in rank-two mass-action systems are not excluded in general. Indeed, the mass-action system
\begin{center}
\begin{tikzpicture}[scale=1.4]

\node (P1) at (0,0)    {$\mathsf{X}+\mathsf{Y}$};
\node (P2) at (1.5,0)    {$2\mathsf{Y}$};
\node (P3) at (-0.75,-2/3) {$\mathsf{X}$};
\node (P4) at (0.75,-2/3) {$2\mathsf{X}$};
\node (P5) at (2.25,-2/3) {$3\mathsf{X}$};
\node (P6) at (0,-4/3)   {$\mathsf{Y}$};
\node (P7) at (1.5,-4/3)   {$\mathsf{0}$};

\draw[arrows={-stealth},thick] (P1) to node[above] {$\kappa_1$} (P2);
\draw[arrows={-stealth},thick] (P3) to node[above] {$\kappa_2$} (P4);
\draw[arrows={-stealth},thick] (P4) to node[above] {$\kappa_4$} (P5);
\draw[arrows={-stealth},thick,transform canvas={yshift=2pt}] (P6) to node[above] {$\kappa_3$} (P7);
\draw[arrows={-stealth},thick,transform canvas={yshift=-2pt}] (P7) to node[below] {$\kappa_5$} (P6);

\node at (4.5,-2/3) {$\begin{aligned}
\dot{x} &= \kappa_2 x - \kappa_1 x y + \kappa_4 x^2, \\
\dot{y} &= \kappa_1 x y - \kappa_3 y + \kappa_5
\end{aligned}$};

\end{tikzpicture}
\end{center}
by Frank-Kamenetsky and Salnikov \cite{frank-kamenetsky:salnikov:1943} admits a stable limit cycle.

\bibliographystyle{abbrv}
\bibliography{biblio}

\end{document}